\newtheorem{theorem}{Theorem}
\newtheorem{conjecture}{Conjecture}
\newtheorem{problem}[conjecture]{Problem}
\theoremstyle{remark}
   \newtheorem{example}{Example}
\newcommand{\comment}[1]{}
\newcommand{\R}{\mathbb R}
\newcommand{\Tr}[1]{Theorem~\ref{#1}}
\newcommand{\Sr}[1]{Section~\ref{#1}}
\newcommand{\pmg}{perfect matching}
\renewcommand{\mod}{mod\ }
\title{Perfect matchings in $r$-partite $r$-graphs}
\author{Ron Aharoni\\
Technion Institute of Technology 
\and Agelos Georgakopoulos\\
Mathematisches Seminar, Universit\"at Hamburg
 \and Philipp Spr\"ussel\\
Mathematisches Seminar, Universit\"at Hamburg}
\date{}
\begin{document}
\maketitle

\begin{abstract}
Let $H$ be an $r$-partite $r$-graph, all of whose sides have the
same size $n$. Suppose that there exist two sides of $H$, each
satisfying  the following condition: the degree of each legal
$(r-1)$-tuple contained in the complement of this side is strictly
larger than $\frac{n}{2}$. We prove that under this condition $H$
must have a perfect matching. This answers a
question of K\"uhn and Osthus.
\end{abstract}

\section{Introduction}
Matchings in hypergraphs are notoriously evasive. There is an
abundance of conjectures in the subject, and no well developed
theory similar to matching theory in graphs. In this paper we prove the sufficiency of a
certain condition, for the existence of a perfect matching in an
$r$-partite $r$-graph. This is a generalization of the well
known result that if in an $n \times n$ bipartite graph the degree
of every vertex is at least $\frac{n}{2}$ then the graph has a
perfect matching.

We will be using the terminology of Diestel \cite{diestelBook05}. An $r$-uniform hypergraph $H$
(also referred to as an $r$-{\em graph}) is said to be $r$-{\em partite} if its
vertex set $V(H)$ can be partitioned into sets $V_1, V_2, \ldots,V_r$,
called the ``sides" of $H$, so that every edge in the edge set $E(H)$ of
$H$ consists of a choice of precisely one vertex from each side.
This means that $E(H) \subseteq V_1 \times V_2\times \ldots \times
V_r$, in particular that the edges of $H$ can be
considered as ordered $r$-tuples.

The {\em degree} $d(f)$
in $H$ of a subset $f$ of $V$ is the number of edges of $H$
containing $f$. An $r$-partite hypergraph is said to be $n$-{\em
balanced} if $|V_i|=n$ for every $1 \le i \le r$. A set of
vertices is called {\em legal} if it meets each side in at most
one vertex.

In \cite{kuhnosthus} K\"uhn and Osthus proved the following:
\begin{theorem}
If in an $n$-balanced $r$-partite $r$-graph $H$ every legal $(r-1)$-tuple has degree at least $n/2 + \sqrt{2n\log n}$ and $n \geq 1000$ then $H$ has a
perfect matching.
\end{theorem}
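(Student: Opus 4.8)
The plan is to argue by induction on $r$, reducing the $r$-partite problem to an $(r-1)$-partite one with the \emph{same} balance $n$ and the \emph{same} threshold $D := n/2 + \sqrt{2n\log n}$. The base case $r=2$ is the bipartite minimum-degree theorem: if an $n$-balanced bipartite graph has all degrees at least $n/2$ on both sides, then by König--Egerváry a minimum vertex cover has size at least $n$ (an uncovered vertex on either side forces at least $n/2$ cover vertices on the other side, and neither side can be fully covered), whence a perfect matching. Note that here the additive term $\sqrt{2n\log n}$ is not needed at all; I expect it to be consumed entirely by the inductive step.

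For the inductive step I would fix a uniformly random bijection $\sigma\colon V_{r-1}\to V_r$ and define an $(r-1)$-partite $(r-1)$-graph $H_\sigma$ on the sides $V_1,\dots,V_{r-1}$, declaring a legal $(r-1)$-tuple $(v_1,\dots,v_{r-1})$ to be an edge of $H_\sigma$ exactly when $(v_1,\dots,v_{r-1},\sigma(v_{r-1}))$ is an edge of $H$. The point of this construction is that a perfect matching of $H_\sigma$ lifts to one of $H$: if edges $f_1,\dots,f_n$ partition $V_1\cup\dots\cup V_{r-1}$, then appending to each $f_i$ the image under $\sigma$ of its $V_{r-1}$-coordinate yields $n$ edges of $H$ that cover $V_r$ bijectively, precisely because $\sigma$ is a bijection. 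It therefore suffices to show that with positive probability $H_\sigma$ again satisfies the degree hypothesis with threshold $D$.

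Verifying the hypothesis for $H_\sigma$ splits according to the missing side of a legal $(r-2)$-tuple $g$. If $g$ contains a vertex $v_{r-1}\in V_{r-1}$ and misses some side $V_j$ with $j\le r-2$, then extending $g$ inside $H_\sigma$ is identical to extending the legal $(r-1)$-tuple $g\cup\{\sigma(v_{r-1})\}$ inside $H$, so the degree of $g$ in $H_\sigma$ equals a degree in $H$ and is at least $D$ \emph{deterministically}. The only genuinely probabilistic case is $g\subseteq V_1\cup\dots\cup V_{r-2}$, where we must extend into $V_{r-1}$. Writing $B_g$ for the bipartite link between $V_{r-1}$ and $V_r$ given by $v\sim u \iff g\cup\{v,u\}\in E(H)$, the hypothesis on $H$ forces $B_g$ to have minimum degree at least $D$ on both sides, and the degree of $g$ in $H_\sigma$ is exactly the number $|\sigma\cap B_g|$ of edges of the random matching $\sigma$ lying in $B_g$.

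The heart of the matter, and the step I expect to be the main obstacle, is to guarantee $|\sigma\cap B_g|\ge D$ simultaneously for all at most $n^{r-2}$ relevant tuples $g$. One computes $\mathbb{E}\,|\sigma\cap B_g| = \tfrac1n\sum_{v\in V_{r-1}}\deg_{B_g}(v)\ge D$, and since $|\sigma\cap B_g|$ changes by at most a constant under a transposition of $\sigma$, a bounded-differences (Azuma--McDiarmid) inequality gives sub-Gaussian concentration with variance proxy of order $n$, after which a union bound over the $n^{r-2}$ tuples would close the induction. The real difficulty is that the expectation exceeds the threshold only \emph{marginally} — in the extremal case where every $B_g$ is $D$-regular the expectation sits exactly at $D$, leaving no slack against which to run the concentration and union bound. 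Making this margin work is precisely where the calibrated additive term $\sqrt{2n\log n}$ and the hypothesis $n\ge 1000$ must be spent, and I expect the decisive idea to be the manufacture of this slack — for instance by carrying a slightly stronger threshold through the induction, or by exploiting that the extremal regular configuration cannot persist across all $g$ at once — rather than the combinatorial reduction, which is routine.
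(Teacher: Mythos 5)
Your reduction idea is sound, but the inductive scheme as you set it up cannot close, for exactly the reason you flag yourself: you try to carry the \emph{same} threshold $D = n/2+\sqrt{2n\log n}$ through every step of the induction. In the extremal case where the link $B_g$ is $D$-regular, $\mathbb{E}\,|\sigma\cap B_g|=D$ exactly, and $|\sigma\cap B_g|$ has fluctuations of order $\sqrt{n}$; so it falls \emph{below} $D$ with probability bounded away from zero, and no concentration inequality can rescue a statement that is false with constant probability. The slack $\sqrt{2n\log n}$ is a fixed budget: it can absorb one loss of order $\sqrt{n\log n}$, not the $r-2$ losses your induction would require (note also that the union bound over $n^{r-2}$ tuples makes the per-step loss grow with $r$). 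The repair is to spend the slack exactly once. First reduce from $r$ to $3$ \emph{deterministically}, with no degree loss: fix a perfect matching $g_1,\ldots,g_n$ of the complete $(r-2)$-partite $(r-2)$-graph on $V_2,\ldots,V_{r-1}$ and contract each $g_i$ to a single vertex; every legal pair of the resulting $3$-partite $3$-graph corresponds to a legal $(r-1)$-tuple of $H$, so the threshold $D$ is preserved exactly. Then run your random-bijection argument a single time, from $r=3$ to $r=2$, and let the threshold \emph{degrade} from $D$ to $n/2$: concentration for permutation statistics plus a union bound over the $n$ vertices of $V_1$ costs a deviation of order $\sqrt{n\log n}$, which is precisely what the additive term (and $n\ge 1000$) pays for, and your bipartite base case needs only minimum degree $n/2$. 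This repaired argument is, in essence, the original proof of K\"uhn and Osthus.

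You should also be aware that the present paper does not prove the statement this way at all. Here it is quoted as a known theorem, and it follows as an immediate corollary of the stronger \Tr{main}: if every legal $(r-1)$-tuple has degree at least $n/2+\sqrt{2n\log n}>n/2$, then the hypotheses of \Tr{main} hold trivially. The paper's proof of \Tr{main} is purely combinatorial and probability-free: reduce to $r=3$ by the same contraction trick, take a counterexample with a maximal edge set (hence containing a matching missing only one vertex per side), and complete that matching by a short case analysis that exchanges at most two of its edges. That argument removes the $\sqrt{2n\log n}$ error term and the condition $n\ge 1000$ entirely, which is the whole point of the paper; your probabilistic route, even once repaired, is intrinsically unable to do so, since the random step genuinely loses $\Theta\bigl(\sqrt{n\log n}\bigr)$ in the threshold.
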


 The following example of K\"uhn and Osthus shows
that demanding that every legal $(r-1)$-tuple has degree at least $n/2$ does not suffice for the existence of a perfect matching:

\begin{example}\label{sharpnessofnover2}
Suppose that $r$ is odd, and that $n$ is even but not divisible by
$4$. For every $i\le r$  choose a subset $A_i$ of $V_i$ of size
$\frac{n}{2}$. Let $H$
be the hypergraph containing precisely those legal $r$-tuples that contain an even number of vertices in $\bigcup_{i \le r}A_i$. Then $d(f)=\frac{n}{2}$ for every legal $(r-1)$-tuple $f$. However, every matching contains an even number of vertices of $\bigcup_{i \le r}A_i$, and since $|\bigcup_{i \le r}A_i|$ is odd there can be no perfect matching in $H$.

For all other values of $r$ and $n$ choose $A_i$ as above such
that $||A_i|-\frac{n}{2}|\le 1$ and $\sum|A_i|$ is odd. This
yields an $r$-partite $r$-graph such that $d(f)\ge \frac{n}{2}-1$
for every legal $(r-1)$-tuple $e$, that has no perfect matching.
\end{example}

K\"uhn and Osthus \cite{kuhnosthus} posed the question whether a minimal degree greater than $\frac{n}{2}$ forces a perfect matching. It is the aim of this paper to prove this assertion, in a somewhat
stronger form:

\begin{theorem} \label{main}
Let $H$ be an $n$-balanced $r$-partite $r$-graph with partition classes $V_1, \ldots , V_r$. If for every legal $(r-1)$-tuple $f$
contained in $V \setminus V_1$ we have $d(f)>\frac{n}{2}$ and for every legal $(r-1)$-tuple $g$
contained in $V \setminus V_r$ we have $d(g)\geq\frac{n}{2}$  then $H$ has a \pmg.
\end{theorem}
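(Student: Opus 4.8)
The plan is to argue by contradiction, building an augmenting structure for a maximum matching. Throughout, for a legal transversal $f$ of $V_2,\dots,V_r$ write $N(f)=\{v\in V_1: f\cup\{v\}\in E(H)\}$, so that the hypothesis on $V_1$ reads $|N(f)|>n/2$ for every such $f$; in particular every transversal of $V_2,\dots,V_r$ extends into $V_1$. Dually, the hypothesis on $V_r$ says that every legal transversal $g$ of $V_1,\dots,V_{r-1}$ has at least $n/2$ extensions into $V_r$. Suppose $H$ had no \pmg, and let $M$ be a matching of maximum size $m<n$. Since $M$ covers exactly $m$ vertices in each side, for simplicity assume $m=n-1$, so that each side has a unique uncovered vertex; call the uncovered vertices $a_0\in V_1$ and $z_0\in V_r$, and note that the uncovered vertices of $V_2,\dots,V_r$ form a single transversal $f_0$ whose $V_r$-coordinate is $z_0$.

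First I would exploit only the $V_1$-hypothesis. Because $|N(f_0)|>n/2$, either $a_0\in N(f_0)$ — in which case $f_0\cup\{a_0\}$ augments $M$, a contradiction — or $N(f_0)\subseteq V_1\setminus\{a_0\}$ consists of covered vertices. Choosing $a_1\in N(f_0)$, lying in the matching edge $g_1$, and replacing $g_1$ by $f_0\cup\{a_1\}$ yields a matching of the same size whose new uncovered transversal is $f_1:=g_1\setminus\{a_1\}$, with $a_0$ still uncovered; this is a \emph{$V_1$-pivot}. Iterating, and regarding each covered $V_1$-vertex $a$ as an index for the matching edge $g_a$ containing it and its transversal $f_a:=g_a\setminus\{a\}$, the reachable transversals yield a set $R\subseteq V_1\setminus\{a_0\}$ with the closure property $N(f_a)\subseteq R$ for all $a\in R$ and $N(f_0)\subseteq R$ — for otherwise some reachable $f_a$ would have $a_0\in N(f_a)$ and could be augmented. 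The transversals $\{f_0\}\cup\{f_a:a\in R\}$ are pairwise disjoint, each of which has more than $n/2$ extensions into $V_1$, all lying in $R$. This already forces $|R|>n/2$, but gives no contradiction on its own: it merely re-expresses that $f_0$ cannot be matched into the exhausted set $R$.

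The missing ingredient is the $V_r$-hypothesis, which I would use through a second, dual kind of move. Given the uncovered transversal $\{a_0\}\cup f'$ of $V_1,\dots,V_{r-1}$, where $f'$ is the middle part of the current uncovered transversal, the weak condition guarantees at least $n/2$ extensions into $V_r$; if the uncovered vertex $z_0$ is among them we augment, and otherwise we may perform a \emph{$V_r$-pivot}, swapping in an extension $c\ne z_0$ and thereby \emph{moving the uncovered $V_1$-vertex} from $a_0$ to the $V_1$-coordinate of the broken edge, while leaving $z_0$ uncovered. Thus $V_1$-pivots relocate the uncovered vertex of $V_r$ and $V_r$-pivots relocate the uncovered vertex of $V_1$. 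Alternating the two types produces a two-railed augmenting structure — one rail tracking the uncovered vertex of $V_1$, the other that of $V_r$, the rungs being the pivots — and an augmentation becomes possible exactly when the two rails can be brought to a common edge of $H$, i.e. when some reachable configuration has $a_0\in N(f)$ simultaneously with $z_0$ an admissible extension.

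The heart of the proof, and the step I expect to be the main obstacle, is to show that this two-railed structure must close up into an augmentation. I would analyse the closed system of reachable configurations by a parity count that weighs the two sides separately: the $V_r$-moves contribute estimates that are only $\ge n/2$, while the $V_1$-moves contribute the strict $>n/2$, and it is this strictness that must break the balance. Indeed Example~\ref{sharpnessofnover2} shows that when every relevant degree equals $n/2$ the obstruction is genuinely a parity obstruction — every matching meets $\bigcup_i A_i$ in an even number of vertices, while $|\bigcup_i A_i|$ is odd — so the decisive lemma should assert that in the closed reachable system the numbers of admissible $V_1$- and $V_r$-extensions cannot both respect the closure unless they satisfy an even/odd relation that the single strict inequality violates, producing the required augmenting rung. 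Making this rigorous — controlling how the two rails interleave and ruling out short cycles that merely permute configurations without augmenting — is the crux; the degree bounds are deliberately tight, so the argument has essentially no slack beyond the one unit furnished by the strict inequality.
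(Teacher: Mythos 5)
Your exchange moves are legitimate, and your instinct that the strict $>\frac n2$ condition on one side must be played off against the weak $\ge\frac n2$ condition on the other is exactly right --- but the proposal stops precisely where the proof has to happen. The ``decisive lemma'' about the closed system of reachable configurations is never stated, let alone proved: you appeal to a ``parity count'' that should force the two rails to meet, but you give no invariant, no counting argument, and no control over how the configurations evolve (note also that after a pivot the matching edge through a $V_1$-vertex is no longer its original edge, so the closure property $N(f_a)\subseteq R$, stated in terms of the original transversals $f_a$, does not literally hold as written). You yourself flag this step as the crux. There is a second, smaller gap: you assume the maximum matching has size $n-1$ ``for simplicity.'' That is not harmless --- a maximum matching in a counterexample could a priori be much smaller. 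The paper justifies near-perfection by a different device: take a counterexample with edge-maximal edge set; then adding any non-edge $z$ creates a perfect matching, which must use $z$, and deleting $z$ from it leaves a matching missing exactly one vertex per side.

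It is instructive to compare with what the actual proof does, because it avoids any iterated alternating structure. First it reduces to $r=3$ by contracting a perfect matching of the complete $(r-2)$-partite graph on $V_2,\dots,V_{r-1}$ into single vertices. Then, with $M$ near-perfect and $x_1,x_2,x_3$ the unmatched vertices, it considers the set $U$ of matched pairs in $V_2\times V_3$: each such pair has more than $\frac n2$ neighbours in $V_1$, so by pigeonhole some single vertex $w\in V_1$ neighbours at least $\frac n2$ pairs of $U$. A three-case analysis around this $w$ finishes everything in at most two exchange steps. The closing count in the hardest case is $\bigl(\frac n2-1\bigr)+\frac{n+1}2>n-1=|M'|$: the matched pairs neighbouring $w$ and the neighbours of the pair $(v_2,x_3)$ must collide on a single edge of $M'$, and that collision edge is exactly the ``common edge'' your two rails were searching for. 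In other words, a one-vertex pigeonhole plus a two-set intersection count replaces your entire parity analysis; no parity argument is needed anywhere, and the one unit of slack from the strict inequality enters only through these two explicit counts. Without a concrete substitute for your missing lemma, the proposal is a plan, not a proof.
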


Example~\ref{sharpnessofnover2} suggests that, possibly, if $r$ is even or $n \neq 2 (\mod 4)$, then the degree condition in \Tr{main} can be relaxed to that of every legal $(r-1)$-tuple having degree at least $\frac{n}{2}$. We do not know whether this is true. In \Sr{problems} we propose some further problems.

In this paper we restricted our attention to $r$-partite hypergraphs. Forcing perfect matchings by large minimum degree of $(r-1)$-tuples in $r$-uniform graphs in general has been an active field lately, see \cite{rodlRuciSzeme} for example.

\section{Proof of \Tr{main}}

In this section we prove \Tr{main}.

\begin{proof}
As noted in \cite{kuhnosthus}, it suffices to prove the theorem
for $r=3$. To see this, let $r>3$ and choose a perfect matching $F=g_1, g_2, \ldots, g_n$ in the complete
$(r-2)$-partite $(r-2)$-graph with vertex partition $V_2 , V_3, \ldots, V_{r-1}$. Let $H'$ be the $3$-partite $3$-graph with vertex partition $V_1 , F , V_r$ where $(x, g_i ,y)$ is an edge of $H'$ if and only if $\{x\}\cup g_i \cup \{y\}$ is an edge of $H$ (where $x \in V_1,~y \in V_r$) . Clearly, $H'$ satisfies the conditions of
the theorem, with $r=3$. Assuming that the theorem is
valid in this case, $H'$ has a perfect matching, and``de-contracting" each $g_i$ results in a perfect matching of $H$.

Thus we may assume that $r=3$. Suppose that the theorem
fails.  By considering a counterexample with maximal set of edges
we may assume that $H$ has a matching $M$ that matches all but one
vertex from each class; let $x_1 \in V_1, x_2 \in V_2, x_3 \in
V_3$ be the unmatched vertices.

Let $U$ be the set of pairs $(u,v)$ where $u\in V_2,
v \in V_3$ and there is an edge of $M$ containing both $u$ and $v$. Since each pair in $U$
has more than $\frac{n}{2}$ neighbors in $V_1$, there exists a
vertex $w\in V_1$ that is a neighbor of at least $\frac{n}{2}$
pairs in $U$. We consider three cases, in all of which we will be
able to construct a perfect matching of $H$.

The first case is when  $w=x_1$. Since the pair $(x_2,x_3)$ has more than
$\frac{n}2$ neighbors in $V_1$, there is an edge
$e=(u_1,u_2,u_3)\in M$ such that $(x_1,u_2,u_3) \in H$ and $(u_1,x_2,x_3) \in H$. Then  $M - e +
(x_1,u_2,u_3)+(u_1,x_2,x_3)$ (standing for  $M \setminus \{e\}
\cup \{(x_1,u_2,u_3), (u_1,x_2,x_3)\}$) is a perfect matching of
$H$.

The next case is when $w$ lies on an edge $f=(w,u_2,u_3)$ of $M$ such
that $(x_1,x_2,u_3) \in E(H)$. Since the pair $(u_2,x_3)$ has
more than $\frac{n}2$ neighbors, there is an edge $g=(v_1,v_2,v_3)
\in M$ such that $v_1$ is a neighbor of the pair $(u_2,x_3)$ and
the element $(v_2,v_3)$ of $U$ is in an edge with $w$. If $v_1=w$ (in
which case $f=g$) then $M - g + (x_1,x_2,v_3)+(v_1,v_2,x_3)$ is a
perfect matching of $H$, and if $v_1\neq w$ then $M - f - g
+(x_1,x_2,u_3)+ (v_1,u_2,x_3)+(w,v_2,v_3)$ is a perfect matching.

Finally, consider the case when $w$ lies in an edge
$f=(w,u_2,u_3)$ of $M$ such that $(x_1,x_2,u_3) \not\in E(H)$. Since
$d((u_2, u_3))>n/2$ and $d((x_1,x_2))\geq n/2$ there is an edge $g=(v_1,v_2,v_3) \in M$
such that $(v_1,u_2, u_3)\in E(H)$ and $(x_1,x_2,v_3)\in E(H)$. Let $M'$
be  the matching $M - f - g + (v_1,u_2,u_3)+(x_1,x_2,v_3)$. The
only vertices not matched by $M'$ are $v_2,x_3$ and $w$. Now we
can repeat the argument of the first case with $w$ playing the
role of $x_1$. But in this case we have to be more careful: as $w$
was a neighbor of at least $\frac{n}2$ pairs in $U$, and the only
element of $U$ that is not in an edge of $M'$ is $(v_2,v_3)$, there
are still at least $\frac{n}2 - 1$ elements of $U$ neighboring $w$
that are each in an edge of $M'$. On the other hand, if
$(w,v_2,x_3)\in E(H)$ we are done. Hence we can assume that the pair
$(v_2,x_3)$ has at least $\frac{n+1}2$ neighbors in $V_1 - w$. But
$\frac{n}2-1+\frac{n+1}2>n-1 = |M'|$, thus there is an edge $e$ of $M'$
containing a pair neighboring $w$ and a neighbor of $(v_2,x_3)$.
Removing $e$ from $M'$ and adding the two corresponding edges
yields a \pmg\ of $H$.

\end{proof}

\section{Open problems} \label{problems}

The condition in \Tr{main}, although sharp for infinitely many values of $n$ and $r$, is very strong.
It is likely that it can be weakened, in more than one way. We offer some conjectures as possible weakenings of the condition. Let $H$ be an $n$-balanced $r$-partite $r$-graph fixed throughout this section. For a subset $I$ of $[r]:=\{1,2,\ldots,r\}$ an $I$-{\em tuple} is
an element of $\times_{i \in I}V_i$. Let $I^c:= [r]\setminus I$.

\begin{conjecture}\label{ij}
Let $I$ be a subset of $[r]$. If $d(f)>\frac{n^{r-|I|}}{2}$ for
every $I$-tuple $f$ and  $d(g) \ge \frac{n^{|I|}}{2}$ for every
$I^c$-tuple $g$ (i.e.\ each $I$-tuple has degree
larger
 than half its degree in the complete $r$-partite hypergraph and each
$I^c$-tuple has degree at least  half its degree in the complete
$r$-partite hypergraph) then $H$ has a \pmg.
\end{conjecture}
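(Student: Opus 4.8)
The plan is to recast the conjecture as a matching problem in a single auxiliary bipartite graph that records how the two coordinate-blocks fit together, and then to run an augmenting argument in the spirit of \Tr{main}. Write $X := \prod_{i\in I}V_i$ and $Y := \prod_{j\in I^c}V_j$, so $|X|=n^{|I|}$ and $|Y|=n^{r-|I|}$, and let $B$ be the bipartite graph on $X \sqcup Y$ in which $f\in X$ is joined to $g\in Y$ exactly when $f\cup g\in E(H)$. The two hypotheses say precisely that in $B$ every $f\in X$ has degree $>|Y|/2$ and every $g\in Y$ has degree $\ge|X|/2$. Call a set of $n$ pairwise disjoint $I$-tuples covering all of the sides $V_i$ ($i\in I$) an \emph{$I$-matching}, and define an \emph{$I^c$-matching} symmetrically. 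The key observation is that a \pmg\ of $H$ is the same thing as a choice of an $I$-matching $\mathcal F$, an $I^c$-matching $\mathcal G$, and a bijection $\sigma\colon\mathcal F\to\mathcal G$ with $f\cup\sigma(f)\in E(H)$ for every $f$; indeed the edges $\{f\cup\sigma(f):f\in\mathcal F\}$ then cover each side exactly once. So it suffices to produce block-matchings $\mathcal F,\mathcal G$ together with a perfect matching between them inside $B$.

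First I would dispose of the case $|I^c|=1$, say $I^c=\{r\}$. Here $Y=V_r$ is the \emph{only} $I^c$-matching, so $\mathcal G$ is forced and the problem collapses to a rainbow statement: for each $v\in V_r$ the neighbourhood $N(v)=\{f\in X:f\cup\{v\}\in E(H)\}$ has $|N(v)|\ge|X|/2$, every $f\in X$ lies in more than $n/2$ of these $n$ sets, and we must pick $f_v\in N(v)$ so that the selected tuples form an $I$-matching. For $|I|=1$ this is exactly the $n\times n$ bipartite min-degree-$n/2$ theorem quoted in the introduction; for $|I|\ge2$ it is a \emph{rainbow} perfect matching in the complete $I$-partite $|I|$-graph with $n$ colour classes of density at least $\tfrac12$. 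I would attack it either by a direct generalisation of the alternating case analysis of \Tr{main} --- replacing the pigeonhole fact ``two subsets of size $>n/2$ of an $n$-set meet'' by its analogue for dense subsets of $X$ --- or, more robustly, via the Aharoni--Haxell deficiency form of Hall's theorem, bounding the topological connectivity of the complex of $I$-matchings that avoid a given colour. Pushing the connectivity estimate down to density exactly $\tfrac12$ is already the delicate point here.

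For $|I^c|\ge2$ the matching $\mathcal G$ is no longer forced, and this is where the real obstacle lies, since two multidimensional matchings must be coordinated simultaneously. One route is to reduce to the previous case by contracting the whole $I^c$-block into a single super-side $K=\{h_1,\dots,h_n\}$ along a perfect matching of the complete $I^c$-block hypergraph, exactly as in the reduction opening the proof of \Tr{main}. The $I^c$-side condition survives \emph{exactly}: each $h_l$ is an $I^c$-tuple, so its degree in the contracted graph equals $d_H(h_l)\ge|X|/2$. The $I$-side condition is the problem: the degree of $f\in X$ into $K$ counts only the $n$ chosen tuples among the $n^{|I^c|}$ that $f$ could meet, and this is not controlled deterministically. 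I would choose the contraction at random and use concentration: since each $f$ meets more than half of all $I^c$-tuples, the expected number of the $h_l$ adjacent to $f$ exceeds $n/2$, and a union bound should keep all these degrees above $\tfrac n2 - O(\sqrt{n\log n})$. Iterating (or symmetrically contracting the $I$-block) then recovers --- and slightly localises --- the K\"uhn--Osthus bound of \Tr{main}, but with the threshold $\tfrac n2+O(\sqrt{n\log n})$ rather than the sharp $\tfrac n2$.

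I expect the exact constant to be the crux. The hypotheses leave no slack, so every probabilistic or absorption step seems to forfeit a lower-order term, whereas a purely extremal alternating argument must maintain an $I$-matching \emph{and} an $I^c$-matching throughout the augmentation --- in effect solving a hard multidimensional matching problem en route, rather than the single bipartite matching that made \Tr{main} tractable. Reconciling the sharp density $\tfrac12$ with the tensor structure of the two blocks is, I believe, the heart of the difficulty, and the reason the statement is offered as a conjecture rather than proved.
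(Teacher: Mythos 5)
You have not proved this statement, and neither does the paper: Conjecture~\ref{ij} is posed as an \emph{open problem}. The paper's only result in its direction is the fractional relaxation proved immediately after it (via LP duality and a fractional cover argument), together with \Tr{main}, which handles a different and formally incomparable degree condition (degrees of $(r-1)$-tuples on two sides, rather than the $I$-tuple/$I^c$-tuple condition of the conjecture). So there is no ``paper proof'' to match your attempt against; the relevant question is only whether your proposal closes the conjecture, and it does not.

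Your preliminary reformulations are sound as far as they go: the auxiliary bipartite graph $B$ on $X \sqcup Y$ correctly encodes the hypotheses, a perfect matching of $H$ is indeed equivalent to a pair consisting of an $I$-matching and an $I^c$-matching joined by a perfect matching inside $B$, and when $|I^c|=1$ the $I^c$-matching is forced, collapsing the problem to a rainbow perfect matching question. But both branches of your case analysis terminate at points you yourself flag as unresolved, and these are precisely the hard parts. In the case $|I^c|=1$, $|I|\ge 2$, you need a rainbow perfect matching at density exactly $\tfrac12$; the topological (Aharoni--Haxell) machinery you invoke gives connectivity bounds that are not known to be sharp enough at this threshold, and no argument is supplied. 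In the case $|I^c|\ge 2$, your random contraction provably loses a lower-order term: concentration gives degrees $\tfrac n2 - O(\sqrt{n\log n})$ on the $I$-side, which recovers only a K\"uhn--Osthus-type bound with slack $O(\sqrt{n\log n})$, not the exact $\tfrac n2$ hypothesis of the conjecture --- and the conjecture's hypothesis has no slack to absorb this loss, as Example~\ref{sharpnessofnover2} shows the constant $\tfrac12$ cannot be weakened. Your closing assessment --- that reconciling the sharp density $\tfrac12$ with the tensor structure of the two blocks is the heart of the difficulty and the reason the statement is a conjecture --- is accurate, but it is a diagnosis, not a proof.
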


A stronger version of Conjecture \ref{ij} is that it suffices to
assume that for every legal $r$-tuple $z$ not belonging to $E(H)$
there holds:
$$\frac{d(z\cap I)}{n^{r-|I|}}+\frac{d(z\cap I^c)}{n^{|I|}}>1.$$

We shall prove a fractional version of this conjecture. A {\em fractional matching} of $H$ is a function $h: E(H) \to \R^+$ such that for every vertex $x$  in $H$ there holds $\sum \{h(e) \mid x\in e\} \leq 1$. We say that $h$ is {\em perfect} if $\sum \{h(e) \mid x\in e\}= 1$ for every vertex $x$.

\begin{theorem}
Let $I$ be a subset of $[r]$. If $\frac{d(z \cap I)}{n^{r-|I|}}+ \frac{d(z \cap I^c)}{n^{|I|}} \ge 1$ for every legal $r$-tuple $z$ not belonging to $E(H)$ then there exists a perfect fractional matching.
\end{theorem}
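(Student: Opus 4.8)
The plan is to reduce the (multi-marginal) problem of finding a perfect fractional matching to a two-sided transportation problem, and then to read off the hypothesis as exactly the feasibility condition for that problem. Write $k=|I|$ and assume $1\le k\le r-1$; the extremal cases $I=\emptyset$ and $I=[r]$ make the hypothesis force $H$ to be complete, where the claim is trivial. Set $A=\prod_{i\in I}V_i$ and $C=\prod_{i\in I^c}V_i$, so $|A|=n^k$ and $|C|=n^{r-k}$, and identify each edge of $H$ with the pair of tuples $(a,c)\in A\times C$ it induces. This turns $E(H)$ into a bipartite graph $B$ on parts $A,C$ in which the degree of $a\in A$ is $d(a)$ and that of $c\in C$ is $d(c)$.

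The first step is the reduction. I claim it suffices to find $g:E(B)\to\R$ with $g\ge 0$ whose $A$-marginal is the constant $n^{-(k-1)}$ and whose $C$-marginal is the constant $n^{-(r-k-1)}$, i.e.\ $\sum_{c}g(a,c)=n^{-(k-1)}$ for every $a\in A$ and $\sum_{a}g(a,c)=n^{-(r-k-1)}$ for every $c\in C$. Indeed, setting $h:=g$ and fixing $i\in I$ and $x\in V_i$, the $n^{k-1}$ tuples $a\in A$ with $a_i=x$ contribute $\sum_{a:a_i=x}\sum_c g(a,c)=n^{k-1}\cdot n^{-(k-1)}=1$, and the coordinates in $I^c$ are handled symmetrically. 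Thus uniform marginals on the two ``super-sides'' $A$ and $C$ force marginal $1$ at every individual vertex, so such a $g$ is automatically a perfect fractional matching.

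The second step invokes the feasibility criterion for transportation problems (a weighted form of Hall's marriage theorem, equivalently max-flow--min-cut). Since both marginals are uniform and the total masses agree ($n^k\cdot n^{-(k-1)}=n=n^{r-k}\cdot n^{-(r-k-1)}$), such a $g$ exists if and only if $|N(T)|\ge |T|\,n^{2k-r}$ for every $T\subseteq C$, where $N(T)\subseteq A$ denotes the neighborhood of $T$ in $B$. Verifying this inequality is the third and final step, and is the only place where the degree hypothesis is used. Fix $T$: it is trivial if $T=\emptyset$, and if $N(T)=A$ then $|N(T)|=n^k\ge |T|\,n^{2k-r}$ because $|T|\le n^{r-k}$. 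Otherwise choose $a\in A\setminus N(T)$ and $c\in T$; since $a$ has no neighbor in $T$, the pair $(a,c)$ is a legal non-edge, so the hypothesis gives $\frac{d(a)}{n^{r-k}}+\frac{d(c)}{n^k}\ge 1$. Every neighbor of $c$ lies in $N(T)$, whence $d(c)\le |N(T)|$, and every neighbor of $a$ lies in $C\setminus T$, whence $d(a)\le n^{r-k}-|T|$. Substituting yields $1\le \frac{n^{r-k}-|T|}{n^{r-k}}+\frac{|N(T)|}{n^k}=1-\frac{|T|}{n^{r-k}}+\frac{|N(T)|}{n^k}$, which rearranges to exactly $|N(T)|\ge |T|\,n^{2k-r}$.

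I expect the only genuine subtlety to be spotting the reduction of the first step; once it is in place the remainder is short. The degree condition is then seen to be precisely calibrated: applied to the non-edge $(a,c)$ together with the two elementary neighbor bounds $d(c)\le|N(T)|$ and $d(a)\le n^{r-k}-|T|$, it delivers Hall's condition with no slack, which matches the non-strict ``$\ge 1$'' in the statement and the non-strict conclusion of mere existence.
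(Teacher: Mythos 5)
Your proof is correct, and it takes a genuinely different route from the paper's. The paper works on the dual side: by LP duality it suffices to show that every fractional cover has total value at least $n$, so it fixes a cover $g$, picks a vertex of minimum $g$-value in each side, and---when the tuple $z$ of these minimizers is a non-edge---combines the degree hypothesis with a decomposition of the complete $|I|$-partite $|I|$-graph into $n^{|I|-1}$ perfect matchings to extract many disjoint ``good'' $I$-tuples, finishing with an algebraic estimate. You work on the primal side: you collapse the $I$-coordinates and the $I^c$-coordinates into two super-sides $A$ and $C$, note that a nonnegative weighting of $E(H)$ with uniform $A$-marginal $n^{-(|I|-1)}$ and uniform $C$-marginal $n^{-(r-|I|-1)}$ is automatically a perfect fractional matching, and then verify the Gale--Hoffman/Hall feasibility condition for this transportation problem; the key observation is that for $T\subseteq C$ with $T\ne\emptyset$ and $N(T)\ne A$, any $a\in A\setminus N(T)$ and $c\in T$ form a legal non-edge, and the hypothesis applied to it, together with $d(c)\le|N(T)|$ and $d(a)\le n^{r-|I|}-|T|$, gives exactly $|N(T)|\ge|T|\,n^{2|I|-r}$. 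Your handling of the degenerate cases ($T=\emptyset$, $N(T)=A$, and $I\in\{\emptyset,[r]\}$, where the hypothesis forces $H$ to be complete) is also sound, as is the implicit appeal to the standard fact that, with equal total supply and demand, the cut condition need only be checked on one side. As for what each approach buys: the paper's argument stays entirely inside the LP-duality framework it sets up and needs no flow machinery, while yours is more structural and essentially constructive---it exhibits the hypothesis as precisely Hall's condition for the collapsed bipartite graph, produces the matching by a max-flow computation rather than certifying its value dually, and yields a perfect fractional matching with additional structure, namely uniform marginals over $I$-tuples and over $I^c$-tuples.
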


\begin{proof}
For a real valued function $f$ and a set $S$ contained in its domain, we write $f[S]$ for $\sum\{f(s) \mid s \in S\}$. A {\em fractional cover} is a function $g: V(H) \to \R_{\ge 0}$ such that $ g[e] \ge 1$ for every $e \in E(H)$.

We have to show that $\nu^*(H)=n$ where $\nu^*(H)$, the {\em fractional matching number of $H$}, is the maximum value of $h[E(H)]$ over all fractional matchings $h$ of $H$. By linear programming duality (see \cite{schrijverBook} for an introduction to the subject), this is equivalent to showing that $\tau^*(H)=n$, namely that $g[V] \ge n$ holds for every fractional cover ($\tau^*(H)$ is the minimum value of $g[V]$ over all fractional covers $g$ of $H$).

So let $g$ be a fractional cover. For every $j \in [r]$ let $\alpha(j)$ be the minimal value of $g$ on $V_j$, and let $v_j$ be a vertex of $V_j$ with $g(v_j)=\alpha(j)$. Also let $\beta=\alpha[I]$ and $\gamma=\alpha[I^c]$.

Consider the $r$-tuple $z=(v_j)_{j \in [r]}$. By the minimality of the $\alpha(j)$'s, we have $g[V] \ge n g[z]$. Hence we may assume that $g[z]=\beta+\gamma < 1$. In particular, we have $z \notin E(H)$.

Write $\frac{d(z \cap I)}{n^{r-|I|}}=\theta$ and $\frac{d(z \cap I^c)}{n^{|I|}}=\zeta$. Call an $I$-tuple $y$ \emph{good} if $y\cup (z\cap I^c) \in E(H)$. Consider the complete $|I|$-partite graph on $\bigcup_{j\in I}V_j$. It is a well known fact (easily proved by induction) that its edge set can be partitioned into $n^{|I|-1}$ perfect matchings. Since there are $\zeta n^{|I|}$ good $I$-tuples, one of those perfect matchings contains at least $\zeta n$ good $I$-tuples; we thus have a set $Y$ of at least $\zeta n$ disjoint good $I$-tuples. For each $j\in I$, denote by $A_j$ the set of vertices in $V_j$ that are contained in an $I$-tuple in $Y$. Since $g[y] \ge 1-\gamma$ for each good $I$-tuple $y$, we have $g[\bigcup_{j\in I}A_j]\ge |Y|(1-\gamma)$. This yields $g[\bigcup_{j\in I}V_j] = g[\bigcup_{j\in I}A_j]+g[\bigcup_{j\in I}(V_j\setminus A_j)] \ge |Y|(1-\gamma)+(n-|Y|)\beta$. Since $\beta < 1-\gamma$ and $|Y|\ge\zeta n$, we obtain $g[\bigcup_{j\in I}V_j] \ge
n\beta + |Y|(1 - \gamma - \beta) \geq n\beta + n \zeta (1-\gamma -\beta) =
\zeta n(1-\gamma)+(1-\zeta)n\beta$. Similarly, we have $g[\bigcup_{j\in I^c}V_j] \ge \theta n(1-\beta)+(1-\theta)n\gamma$ and thus
\begin{align*}
  g[V] &\geq \zeta n(1-\gamma)+(1-\zeta)n\beta+\theta n(1-\beta)+(1-\theta)n\gamma\\
  &= n(\zeta+\theta)+n\beta(1-\zeta-\theta)+n\gamma(1-\zeta-\theta)\\
  &= n\big(1+(\beta+\gamma-1)(1-\zeta-\theta)\big)\\
  &\ge n,
\end{align*}
since $\beta+\gamma-1<0$ and $1-\zeta-\theta \le 0$.
\end{proof}

Let us mention that the problem of forcing perfect fractional  matchings by large minimum degree in $r$-uniform hypergraphs that are not necessarily $r$-partite has been studied in \cite{RRSEuroJ}.

Next we ask what condition on the degrees of vertices, rather than
$I$-tuples, suffices for the existence of a perfect matching in an
$n$-balanced $r$-partite hypergraph.

\begin{problem} \label{prob}
Is it true that if $d(x)\geq (1 - 1/e)n^{r-1}$ for every vertex $x$ of $H$ then there is a \pmg?
\end{problem}

Taking a subset $X_i$ of
$V_i$ of size a bit less than $\frac{n}{r}$ for each $i \in [r]$, and
letting $H$ be the hypergraph consisting of all edges meeting
$\bigcup_{i \in [r]}X_i$, shows that if
the assertion of Problem~\ref{prob} is true then it is asymptotically tight (as $r$ goes to infinity).

Some of the most intriguing conjectures on $3$-partite hypergraphs
were originally formulated in terms of Latin squares. Here is one
of the best known of those, the Brualdi-Ryser conjecture
(\cite{brualdi, ryser}):

\begin{conjecture}
Let $H$ be an $n$-balanced  $3$-partite hypergraph in which every legal
$2$-tuple participates in precisely one edge. If $n$ is odd then there exists a
perfect matching and if $n$ is even there is a matching of size $n-1$.
\end{conjecture}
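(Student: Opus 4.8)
The plan is to first translate the statement into the language of Latin squares, which is where all partial progress on it lives. An $n$-balanced $3$-partite hypergraph in which every legal $2$-tuple lies in exactly one edge is precisely the incidence structure of a Latin square $L$ of order $n$: taking $V_1,V_2,V_3$ to be the sets of rows, columns and symbols, the edges are the triples $(i,j,L(i,j))$, and the three families of $2$-tuple conditions say respectively that each cell carries one symbol, each symbol occurs once per row, and each symbol occurs once per column. Under this dictionary a \pmg\ is exactly a \emph{transversal} of $L$ (a choice of $n$ cells, one per row and one per column, with pairwise distinct symbols), and a matching of size $n-1$ is a partial transversal missing a single row and column. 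So the two assertions become: every odd-order Latin square has a transversal, and every Latin square has a partial transversal of size $n-1$.

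The even case cannot be strengthened to a full transversal, and understanding why guides the proof. The Cayley table of $\mathbb Z_n$ has a transversal if and only if $n$ is odd: a transversal corresponds to a permutation $\sigma$ for which $i\mapsto i+\sigma(i)$ is again a permutation (a \emph{complete mapping}), and since the symbols so produced must be all of $\mathbb Z_n$ we need $\sum_i\big(i+\sigma(i)\big)\equiv\sum_i i \pmod n$, i.e.\ $2\sum_i i\equiv\sum_i i$, forcing $\tfrac12 n(n-1)\equiv 0\pmod n$, which fails for even $n$. This parity obstruction is exactly why one asks only for size $n-1$ in the even case, and the Hall--Paige theory of complete mappings settles the special case of group-based squares. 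The difficulty is that a general Latin square is \emph{not} a Cayley table, so no such algebraic invariant is available and one must argue purely combinatorially.

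For the lower bound I would pursue a randomized embedding combined with absorption. First I would fix in advance a small, robustly structured family of \emph{absorbing} configurations --- short alternating cycles and trades in $L$ that can repair a bounded number of symbol-repeats while preserving the one-per-row and one-per-column property --- arranged so that any near-transversal covering all but $o(n)$ cells can be upgraded to one of size $n-1$. Then I would build a partial transversal by a semirandom (nibble) process, maintaining quasirandomness of the set of still-available symbols so that at each step a large fraction of extensions avoid a repeat; this yields a partial transversal of size $n-o(n)$, which the pre-reserved absorbers finish off. The plausibility of such a scheme rests on the observation that the uniform assignment $h(e)=1/n$ is already a perfect \emph{fractional} matching, so the only obstruction is the fractional-to-integral rounding --- precisely the regime in which absorption is designed to operate.

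The main obstacle, and the reason this is a long-standing open conjecture rather than a routine exercise, is closing the final gap: pushing a partial transversal of size $n-o(n)$ all the way up to $n-1$, and in the odd case the last single cell up to a genuine transversal. The historical bounds $n-O(\sqrt n)$, then $n-O(\log^2 n)$, and later $n-O(\log n/\log\log n)$ show that the entire difficulty concentrates in the last few cells, where local repairs run out and where the parity of $n$ must somehow be detected \emph{globally}. A complete proof therefore needs an absorber whose repairing power remains effective down to the very last cell, together with a parity-sensitive certificate (in the odd case) ruling out any global obstruction of the $\mathbb Z_{2k}$ type; supplying both ingredients simultaneously for \emph{arbitrary} Latin squares is exactly what makes the statement resist elementary attack.
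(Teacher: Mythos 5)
You should note first that the statement you were given is the Brualdi--Ryser conjecture, which the paper states explicitly as an open \emph{conjecture}: the paper contains no proof of it, so there is no argument of the authors to compare yours against. The question is therefore only whether your proposal constitutes a proof on its own, and it does not. Your preliminary material is sound: the dictionary between $n$-balanced $3$-partite hypergraphs with all legal $2$-tuple degrees equal to $1$ and Latin squares of order $n$ is correct, a perfect matching is indeed a transversal, and your computation in the Cayley table of $\mathbb{Z}_n$ (summing $i+\sigma(i)$ over a putative complete mapping to force $\tfrac12 n(n-1)\equiv 0 \pmod n$) correctly shows that for even $n$ one cannot hope for a full transversal, so the statement is sharp. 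But none of this proves either assertion of the conjecture.

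The genuine gap is that your third and fourth paragraphs are a research program, not an argument, and you concede as much. The observation that $h(e)=1/n$ is a perfect fractional matching gives nothing here: fractional perfect matchings exist in many hypergraphs with no large integral matching, and ``fractional-to-integral rounding'' is not a step one can invoke --- it is the entire problem. The semirandom (nibble) process does yield a partial transversal of size $n-o(n)$, but every known absorption scheme for this problem requires either randomness in the Latin square or structural assumptions that an adversarial square need not satisfy; an absorber ``whose repairing power remains effective down to the very last cell,'' valid for \emph{arbitrary} Latin squares, is precisely the object nobody has constructed, and you do not construct it either. Likewise the ``parity-sensitive certificate'' for the odd case is named but never defined. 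The best unconditional bounds you cite (of the shape $n-O(\log n/\log\log n)$) confirm that the final gap is where all the difficulty lies, so asserting that a suitable absorber would close it is a restatement of the conjecture rather than a proof. In short: your reduction and sharpness discussion are correct, but the proposal establishes neither the odd case nor the even case, and no completion of its missing steps is currently known.
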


As Stein pointed out in \cite{steinLatin}, the condition of the Brualdi-Ryser conjecture is probably way too strong, and the conclusion is probably valid
assuming much less than that. Here is a rather bold conjecture of this type:

\begin{conjecture}
Let $H$ be an $n$-balanced $r$-partite $r$-graph, and let $I$ be a subset of $[r]$. If $d(e)=d(f)$ for every two $I$-tuples $e,f$ and $d(g)=d(z)$ for every two $I^c$-tuples $g,z$ then there is a \pmg\ unless $r$ is odd and $n$ even.
\end{conjecture}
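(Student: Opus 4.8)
My plan is to separate the statement into its fractional skeleton, which turns out to be automatic, and its integral core, which is the real content. First I would record a structural consequence of the hypothesis. Writing $a$ for the common degree of the $I$-tuples and $b$ for that of the $I^c$-tuples, and counting the edges of $H$ through their unique $I$- and $I^c$-projections, one gets $a\,n^{|I|}=|E(H)|=b\,n^{r-|I|}$. Since a vertex of an $I$-side lies in exactly $n^{|I|-1}$ $I$-tuples and a vertex of an $I^c$-side in $n^{r-|I|-1}$ $I^c$-tuples, every vertex of $H$ has the same degree $D:=|E(H)|/n$. Hence the uniform assignment $e\mapsto 1/D$ is already a perfect fractional matching, so the fractional relaxation holds for free (consistently with the fractional matching theorem proved above, whose hypothesis here reads $2|E(H)|/n^{r}\ge 1$). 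The problem is therefore purely one of integrality: I must select $n$ pairwise disjoint edges whose $I$-projections tile the $I$-sides and whose $I^c$-projections tile the $I^c$-sides. I would encode this in the $(a,b)$-biregular bipartite auxiliary graph $B$ whose colour classes are the $n^{|I|}$ $I$-tuples and the $n^{r-|I|}$ $I^c$-tuples, with $f$ adjacent to $g$ when $f\cup g\in E(H)$; a \pmg\ of $H$ is then a set of $n$ edges of $B$ whose $I$-endpoints form one perfect matching (parallel class) of the complete $|I|$-partite graph on $\bigcup_{j\in I}V_j$ and whose $I^c$-endpoints form one parallel class of the complete $|I^c|$-partite graph on $\bigcup_{j\in I^c}V_j$, using the parallel-class decomposition already invoked in the proof of the fractional theorem.

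The step I expect to be the main obstacle is precisely this integral core, and I do not expect it to be genuinely easier than a famous open problem. Taking $r=3$, $I=\{1,2\}$ and $a=1$ gives the arrays in which each $(V_1,V_2)$-pair lies in a unique edge and each symbol of $V_3$ is used exactly $n$ times; this family contains all Latin squares, for which a \pmg\ is exactly a transversal, so the statement contains the Brualdi--Ryser conjecture \cite{brualdi,ryser}. Moreover the excluded class ``$r$ odd and $n$ even'' is exactly where the classical counterexamples live, such as the addition table of $\mathbb{Z}_n$ with $n$ even, and where Example~\ref{sharpnessofnover2} operates. Consequently no elementary augmentation of the kind used for \Tr{main} can settle the conjecture, and the usual contraction reduction to smaller $r$ is unavailable, because fixing a single parallel class on the $I$-sides (as in the proof of \Tr{main}) removes the freedom to co-adapt the tilings of the $I$- and $I^c$-sides on which a \pmg\ depends. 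The essential difficulty is the integrality gap: rounding the uniform perfect fractional matching to an integral one.

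For a partial result I would retreat to the asymptotic regime, following Stein's heuristic \cite{steinLatin} that the design hypotheses are far stronger than necessary. When $|E(H)|$ is a constant fraction of $n^{r}$, so that the common degree $D$ is of order $n^{r-1}$, I would apply the absorption and regularity machinery of R\"odl, Ruci\'nski and Szemer\'edi \cite{rodlRuciSzeme}: use the perfect fractional matching to extract an almost-perfect integral matching, and then absorb the bounded remainder using the large codegrees forced by regularity. To eliminate the parity obstruction in the permitted cases I would treat ``$r$ even'' and ``$n$ odd'' separately, either adapting the rerouting of \Tr{main} with $D$ guaranteeing enough room, or pursuing an algebraic route: express the number of perfect matchings as a permanent-type sum over the regular structure and try to prove its non-vanishing outside the excluded parity class by an Alon--Tarsi-type argument.

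In short, the fractional part is immediate and the combinatorial reformulation is clean, but the integral step subsumes Brualdi--Ryser, so a complete proof appears to require either a breakthrough on that problem or an extra quantitative hypothesis---a lower bound on $|E(H)|$, or large $n$---under which the fractional-to-integral rounding can be controlled; the large-$n$ version is, I believe, the right first target.
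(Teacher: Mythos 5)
The statement you were asked to prove is not a theorem of the paper: it appears there as an open conjecture (introduced by the authors as ``a rather bold conjecture''), and the paper contains no proof of it, so there is no argument of the paper to compare yours against. Your proposal, rightly, does not claim a proof either, and your diagnosis of why none should be expected matches the paper's own framing: the conjecture is stated immediately after the Brualdi--Ryser conjecture, explicitly as a weakening of its hypotheses, so any proof would in particular resolve that famous open problem. Your supporting computations are correct: double counting gives $a\,n^{|I|}=|E(H)|=b\,n^{r-|I|}$, hence every vertex has the same degree $D=|E(H)|/n$, and (provided $D>0$) the constant function $e\mapsto 1/D$ is a perfect fractional matching, so the entire difficulty is the integrality gap; the bipartite reformulation and the observation that the excluded class ``$r$ odd, $n$ even'' is exactly where the known counterexamples (the addition table of $\mathbb{Z}_n$ for even $n$, and Example~\ref{sharpnessofnover2}) live are also sound. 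Two small corrections are in order. First, your specialization $r=3$, $I=\{1,2\}$, $a=1$ captures only the odd-$n$ half of Brualdi--Ryser, i.e.\ Ryser's transversal conjecture; the even-$n$ assertion (a matching of size $n-1$) falls inside the excluded parity class and is not implied by the statement. Second, as literally written the conjecture fails for the empty hypergraph: all degrees are then equal (to $0$), the hypothesis holds vacuously, and there is no perfect matching even when $r$ is even or $n$ odd; so the common degrees must be read as positive --- the same nondegeneracy your fractional matching requires. With those caveats, your analysis is a fair account of the status of this statement: open, and at least as hard as Ryser's conjecture, with the asymptotic/absorption regime you describe being the natural place to seek partial results.
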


Let us mention a result in this direction, in which the assumptions are again probably way too strong:
\begin{theorem}[\cite{aharoni_berger_ziv}]
Let $H$ be a $3$-partite hypergraph, with sides $V_i, i=1,2,3$, where $|V_1|=n$ and $|V_2|\ge 2n-1$. Suppose, furthermore, that the degree of every pair in $(V_1\times V_2)$  is $1$ and the degree of every pair in $(V_1 \times V_3)$ is at most $1$. Then there exists in $H$ a matching of size $n$.
\end{theorem}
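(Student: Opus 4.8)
The plan is to translate the hypotheses into a statement about rainbow matchings in a bipartite graph and then to finish by a short greedy argument. Since the degree of every pair $(u,v) \in V_1 \times V_2$ equals $1$, each $u \in V_1$ determines a function $f_u \colon V_2 \to V_3$ by letting $f_u(v)$ be the unique vertex with $(u,v,f_u(v)) \in E(H)$; and since the degree of every pair in $V_1 \times V_3$ is at most $1$, each $f_u$ is injective. Consequently, if $B$ denotes the bipartite graph with sides $V_2$ and $V_3$, then for each $u$ the set $F_u := \{\, vf_u(v) : v \in V_2 \,\}$ is a matching of $B$ of size exactly $|V_2| \ge 2n-1$. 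A matching of size $n$ in $H$ must use all $n$ vertices of $V_1$, so it amounts to choosing one edge $e_u \in F_u$ for each $u \in V_1$ with the chosen edges pairwise disjoint; that is, a matching of size $n$ in $H$ is precisely a \emph{rainbow matching} of size $n$ selected from the $n$ matchings $(F_u)_{u \in V_1}$.

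I would then build such a rainbow matching greedily. Fix an arbitrary ordering $u_1, \ldots, u_n$ of $V_1$ and construct nested rainbow matchings $R_0 \subseteq R_1 \subseteq \cdots \subseteq R_n$, where each $R_i$ has size $i$ and uses one edge from each of $F_{u_1}, \ldots, F_{u_i}$. Given $R_{i-1}$, note that it covers exactly $i-1$ vertices of $V_2$ and $i-1$ vertices of $V_3$. An edge of $F_{u_i}$ fails to extend $R_{i-1}$ only if one of its two endpoints is already covered; since $F_{u_i}$ is a matching, each covered vertex lies on at most one edge of $F_{u_i}$, so at most $(i-1)+(i-1)=2(i-1)$ edges of $F_{u_i}$ are blocked. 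As $|F_{u_i}| \ge 2n-1 > 2(i-1)$ for every $i \le n$, some edge of $F_{u_i}$ is disjoint from $R_{i-1}$, and adding it yields $R_i$. The final matching $R_n$ has size $n$ and corresponds to the desired matching of $H$.

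I do not expect a genuine obstacle in this (weak) form of the theorem: the entire content is the observation that the two degree conditions make each $F_u$ a matching of size at least $2n-1$, after which the greedy step is forced. The only points needing care are the verification that $F_u$ really is a matching, which uses \emph{both} degree hypotheses, and the arithmetic $2(i-1) < 2n-1$, whose tightness at $i=n$ shows that $|V_2| \ge 2n-1$ is exactly the threshold this method can reach. Pushing below $2n-1$, which the authors hint should be possible, would require replacing the greedy count by the topological connectivity bounds for independence complexes used in \cite{aharoni_berger_ziv}; but no such machinery is needed for the statement as written.
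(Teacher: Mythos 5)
Your proof is correct, and in fact the paper contains no proof of this statement to compare against: the theorem is quoted from \cite{aharoni_berger_ziv} as a known result (with the remark that its hypotheses are ``probably way too strong''). Your translation of the hypotheses is sound: the condition that every pair in $V_1\times V_2$ has degree exactly $1$ makes each $f_u$ a totally defined function on $V_2$, so $|F_u|=|V_2|\ge 2n-1$, and the condition that every pair in $V_1\times V_3$ has degree at most $1$ makes $f_u$ injective, so each $F_u$ is a matching of the bipartite graph on $V_2\cup V_3$; since every edge of a matching of size $n$ in $H$ uses a distinct vertex of $V_1$ and $|V_1|=n$, such a matching is exactly a rainbow matching for the family $(F_u)_{u\in V_1}$. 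The greedy count is also right: a partial rainbow matching of size $i-1$ covers $2(i-1)\le 2n-2$ vertices, each lying on at most one edge of the matching $F_{u_i}$, so at least one edge of $F_{u_i}$ survives, and the arithmetic $2n-2<2n-1$ is tight precisely at the last step. The genuine difference from the cited source is methodological: \cite{aharoni_berger_ziv} derives such results from topological machinery (connectivity bounds for independence complexes, in the framework of independent systems of representatives), which is general enough to aim below the greedy threshold $2n-1$, whereas your argument is elementary and self-contained but cannot be pushed past $|V_2|\ge 2n-1$. For the statement exactly as it appears in this paper, your greedy proof is complete and is arguably the natural one.
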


\bibliographystyle{plain}
\bibliography{graphs}

\small
\vskip2mm plus 1fill
\parindent=0pt\obeylines

\bigbreak
Ron Aharoni {\tt <ra@tx.technion.ac.il>}
\smallskip
Department of Mathematics
Technion, Haifa
Israel 32000
\smallskip

Agelos Georgakopoulos {\tt <georgakopoulos@math.uni-hamburg.de>}
Philipp Spr\"ussel {\tt <philipp.spruessel@gmx.de>}
\smallskip
Mathematisches Seminar
Universit\"at Hamburg
Bundesstra\ss e 55
20146 Hamburg
Germany

\end{document}